\documentclass[12pt]{amsart}

\usepackage{amsmath, amscd, amssymb, amsthm,amsfonts}
\usepackage{stmaryrd,euscript, mathrsfs, latexsym}
\usepackage{color}
\usepackage{hyperref} 

\usepackage{lmodern}
\usepackage[T1]{fontenc} 

\ifx\pdfoutput\undefined
\usepackage{graphicx}
\else
\fi

\ifx\xetex
\usepackage{fontspec}
\usefont{EU1}{lmr}{m}{n}
\else
\fi

\usepackage{tikz}
\usetikzlibrary{matrix,arrows,positioning}
\tikzset{node distance=2cm, auto}

\usepackage[nodayofweek]{datetime}



\mathsurround=1pt
\footskip=4ex 
\textheight=22cm

\textwidth 17cm
\oddsidemargin -0.3cm
\evensidemargin -0.3cm

\parskip=\medskipamount
\lineskip=1.25pt


\flushbottom

\abovedisplayskip=1em plus.3em minus.5em
\belowdisplayskip=1em plus.3em minus.5em
\abovedisplayshortskip=.5em plus.2em minus.4em
\belowdisplayshortskip=.5em plus.2em minus.4em

\pagestyle{plain}

\setcounter{tocdepth}{1}
\tolerance=3000
\hbadness=4000
\hfuzz=1pt




\DeclareMathOperator{\Td}{Td}

\DeclareMathOperator{\SL}{SL}

\DeclareMathOperator{\img}{im}

\DeclareMathOperator{\Spec}{Spec}

\DeclareMathOperator{\Sym}{Sym}

\DeclareMathOperator{\Morph}{Hom}

\DeclareMathOperator{\inv}{inv}

\DeclareMathOperator{\End}{End}

\DeclareMathOperator{\Hom}{Hom}

\DeclareMathOperator{\NH}{NH}
\DeclareMathOperator{\GNH}{GNH}

\newcommand{\conj}[1]{\quad\textnormal{ #1 }\quad}

\newcommand{\inp}[1]{\ensuremath{\langle #1 \rangle}}

\newcommand{\normaltext}[1]{\textnormal{#1}}

\makeatletter
\def\imod#1{\allowbreak\mkern2.5mu({\operator@font mod}\,#1)}
\makeatother

\renewcommand{\b}{\beta}

\newcommand{\s}{\sigma}

\newcommand{\ga}{\gamma}
\renewcommand{\d}{\delta}
\DeclareMathOperator{\Id}{Id}
\DeclareMathOperator{\Fl}{Fl}




\newcommand{\aP}{\mathcal{P}}






\usepackage{bbm}

\renewcommand{\AA}{\mathbb{A}}
\newcommand{\BB}{\mathbb{B}}
\newcommand{\CC}{\mathbb{C}}

\newcommand{\LL}{\mathbb{L}}

\newcommand{\QQ}{\mathbb{Q}}

\newcommand{\ZZ}{\mathbb{Z}}


\theoremstyle{plain}
\newtheorem{theorem}[subsection]{Theorem}

\newtheorem{proposition}[subsection]{Proposition}
\newtheorem{corollary}[subsection]{Corollary}

\theoremstyle{remark}

\newtheorem{remark}[subsection]{Remark}

\theoremstyle{definition}
\newtheorem{example}[subsection]{Example}
\newtheorem{examples}[subsection]{Examples}

\newtheorem{definition}[subsection]{Definition}

\newtheorem{notation}[subsection]{Notation}

\numberwithin{equation}{section}

\usepackage{diagrams}
\usepackage[all]{xy}
\usepackage{lineno}


\newcommand{\aGNH}{\mathcal{GNH}}

\DeclareMathOperator{\cyc}{cyc}

\begin{document}

\title[Deformations of nilHecke algebras]{Deformations of nilHecke Algebras}
\author[Benjamin Cooper]{Benjamin Cooper}
\address{Institut f\"{u}r Mathematik, Universit\"{a}t Z\"{u}rich, Winterthurerstrasse 190, CH-8057 Z\"{u}rich}
\email{benjamin.cooper\char 64 math.uzh.ch}

\begin{abstract}
  Presentations are given for the deformed nilHecke algebras which appear in
  the work of Bressler and Evens. The obstruction to the braid relation
  being satisfied by each algebra is given. A family of positively graded
  deformations of nilHecke algebras is defined and shown to be independent
  of the deformations determined by oriented cohomology theories.
\end{abstract}

\maketitle

\section{Introduction}\label{introsec}
The nilHecke algebras $\NH_n$ have found essential applications to a variety
of fields: from numerical analysis, to the study of the topology and the
representation theory of the group $\SL_n(\CC)$.  More recently, the program
of categorification developed by Khovanov, Lauda and Rouquier \cite{KhL, R}
has found that the nilHecke algebras are uniquely important in the study of
geometric representation theory \cite{R, VV}. Other authors have found that
differential graded analogues of nilHecke algebras control a codimension 2
extension of the Heegaard-Floer topological field theory \cite{LOT}. The
many places in which these algebras appear suggests that one should study of
the many shapes that they can form. In this article, families of
deformations of nilHecke algebras are introduced and studied.

The origins of the nilHecke algebra are rooted in the development of
Schubert calculus for the space of full flags:
$$\Fl_n = \{ V_0 \subset V_1 \subset \cdots \subset V_{n-1} \subset V_n = \CC^n \} \conj{ where } \dim V_i = i.$$
Borel and Hirzebruch showed that cohomology of this space has an invariant
description in terms of Chern classes: $H^*(\Fl_n) \cong \ZZ[x_1,\ldots,
x_n]_{\Sigma_n}$ \cite{Borel}.  On the other hand, this cohomology ring
inherits a basis from a stratification by Schubert cells.  In \cite{BGG, D},
convolutions $H^*(\Fl_n) \to H^{*-2}(\Fl_n)$ were found which allow one to
interpolate between these two descriptions of cohomology: expressing the
classes which are dual to Schubert cells in terms of Chern classes. These
convolutions generate the most important part of the nilHecke algebra.

In \cite{BE1, BE2}, Bressler and Evens studied how the solution to the
problem of describing the Schubert basis changes as one varies the
cohomology theory $H^*(\Fl_n)$ through complex oriented cohomology theories
$h^*(\Fl_n)$.  Computing the convolutions leads to analogues of divided
difference operators which depend on a formal group law. They proved that
these operators continue to interpolate between the two descriptions of the
generalized cohomology $h^*(\Fl_n)$ of the space of flags.

In this article, we study the algebra of the divided difference operators
defined by Bressler and Evens. Each complex oriented cohomology theory
determines a nilHecke algebra. In special cases, this algebra is a
deformation of the nilHecke algebra. Theorem \ref{bsgnhthm} derives a
presentation for each such algebra. The main result in \cite{BE1} is that
the braid relation is satisfied only by divided difference operators which
have been deformed by polynomial formal group laws. The presentation
contained here gives a precise obstruction to the braid relation in terms of
two power series: $l,r\in R[[x,y,z]]$. This allows us to isolate
deformations of intermediate complexity with respect to the braid relation,
see Remark \ref{stackrem}.

The deformations determined by complex oriented cohomology theories must be
restricted to those with negatively graded deformation parameters.  In
Section \ref{defpossec}, a family of deformations which have positively
graded deformation parameters are studied. These algebras cannot be derived
from complex oriented cohomology theories.

This paper is part of an ongoing research project \cite{Cooper,BelC}.

\subsection*{Acknowledgements}
The author would like thank M. Khovanov for discussing these results in
2011.  Also A. Beliakova, M. Hill, N. Kuhn, A. Lauda and W. Wang for their
interest.

In 2012, these results were presented at the University of Virginia,
University of Zurich, the AMS Tampa meeting and Knots in Washington.  The
author would like to thank O. Yacobi and A. Hoffnung for the opportunity to
present these results at the Fields Institute in May 2012.

Special thanks to the organizers of the conference ``Quantum Topology and
Hyperbolic Geometry'' for their encouragement of younger researchers.

\section{Complex Oriented Cohomology Theories}\label{complexorthsec}
In this section we recall some of the prerequisite ideas from topology.

\subsection{Generalized cohomology}\label{generalizedcohomology}
Here we discuss a class of cohomology theories which support a theory of
Schubert calculus.

\begin{definition}
A \emph{cohomology theory} $h^* = \{h^n\}_{n\in\ZZ}$ is a family of
contravariant functors from the homotopy category of topological spaces to
the category of abelian groups which satisfy the Eilenberg-Steenrod
axioms. 
\end{definition}

\begin{definition}
When the collection
  $h^*(X)$ forms a graded ring, the cohomology theory $h^*$ is called
  \emph{multiplicative}. A multiplicative cohomology theory $h^*$ is
  \emph{oriented} by $t\in h^*(\CC P^\infty)$ when $i^*(t)$ generates
  $h^*(\CC P^1)$ as an $h^*(pt)$-module where $i : \CC P^1\hookrightarrow
  \CC P^\infty$ is the standard inclusion.

  In what follows all cohomology theories will be multiplicative complex
  oriented cohomology theories unless otherwise noted.
\end{definition}

\begin{remark}
For such a cohomology theory $h^*$, the collapsing of the Atiyah-Hirzebruch spectral sequence 
gives an isomorphism,
$$h^*(BT^n) \cong h^*(pt)[[x_1,\ldots, x_n]].$$
A splitting principle argument yields the isomorphism,
$$h^*(BU(n)) \cong h^*(BT^n)^{\Sigma_n} \cong h^*(pt)[[x_1,\ldots, x_n]]^{\Sigma_n} \cong h^*(pt)[[c_1,\ldots, c_n]].$$
where $\Sigma_n$ is the symmetric group. This results in a theory of Chern classes:
if $h^*$ is oriented by $x_h\in h^*(\CC P^\infty)$ then to any
complex vector bundle $K\to X$ one can assign classes $c_i(K)\in h^{2i}(X)$
which satisfy the following properties:
\begin{enumerate}
\item $c_0(K) = 1$.
\item Naturality: $c_i(f^*K) = f^*c_i(K)$ for all maps $f : Y\to X$.
\item Whitney sum:
$$c_n(K\oplus K') = \sum_{i+j = n} c_i(K)\cdot c_j(K')$$
\item Let $\gamma \to \CC P^{\infty}$ be the Hopf line bundle over $\CC P^{\infty}$ then $c_1(\gamma) = x_h$, when $h^*$ is oriented by $x_h\in h^*(\CC P^\infty)$.
\end{enumerate}
\end{remark}

\begin{remark}
Properties (1)--(4) hold uniformly with respect to the cohomology theories
under consideration. There is one property which changes as we change the
cohomology theory, if $L$ and $L'$ are line bundles over a space $X$ then
the first Chern class of their tensor product $c_1(L \otimes L')$ depends on
$h^*$:
$$c_1(L \otimes L') = F(c_1(L), c_1(L')),$$
where $F(x,y) = \otimes^*(x_h) \in h^*(\CC P^\infty\times \CC P^\infty) \cong
h^*(pt)[[x,y]]$.
\end{remark}

\begin{definition}\label{cofgldef}
  The power series $F(x,y)\in h^*(pt)[[x,y]]$ is the \emph{formal group law
    associated to} the cohomology theory $h^*$.
\end{definition}

\begin{definition}\label{fgldef}
A \emph{formal group law} is a commutative ring $R$ together with an element $F(x,y)\in
R[[x,y]]$ such that
\begin{enumerate}
\item $F(0,x) = x$ and $F(x,0) = x$
\item $F(x,y) = F(y,x)$
\item $F(x,F(y,z)) = F(F(x,y),z)$
\end{enumerate}
for all $x,y,z\in R$.

A formal group law is \emph{graded} when $R$ is a graded commutative ring,
$|x| = 2$, $|y| = 2$ and the series $F(x,y)\in R[[x,y]]$ is homogeneous of degree 2.
\end{definition}

\begin{remark}
  The formal group law associated to a cohomology theory in Definition
  \ref{cofgldef} satisfies these properties because the tensor product of
  line bundles is unital, commutative and associative.
\end{remark}

\begin{notation}\label{notinv}
  By the formal implicit function theorem, there exists a unique series
  $\inv(x) \in R[[x]]$ such that $F(x, \inv(x)) = 0$, see
  \cite{Hazewinkel}. Throughout the remainder of the paper we will use the
  notation,
$$x +_F y = F(x,y)\quad\normaltext{  and  }\quad x-_F y = F(x,\inv(y)).$$  
\end{notation}

\begin{definition}\label{symdef}
A formal group law $F$ is \emph{symmetric} when it satisfies
$$x_i -_F x_{i+1} = -(x_{i+1} -_F x_i).$$
\end{definition}

\begin{definition} 
A \emph{homomorphism} $\phi : F \to G$ between two formal group laws $F,G \in R[[x,y]]$ is a power series $\phi \in R[[x]]$ which satisfies the equation:
$$\phi(F(x,y)) = G(\phi(x),\phi(y)).$$

Suppose that $F \in R[[x,y]]$ and $G\in S[[x,y]]$ are formal group laws:
$$F(x,y) = \sum_{i,j} c_{ij} x^i y^j \quad\normaltext{  and  }\quad G(x,y) = \sum_{i,j} d_{ij} x^i y^j.$$
If $\varphi : R\to S$ is a ring homomorphism such that $d_{ij} =
\varphi(c_{ij})$ then we write $\varphi(F) = G$. 
\end{definition}

\begin{definition}
A formal group law $F_U\in \LL[[x,y]]$ is \emph{universal} when,
for all formal group laws $G\in S[[x,y]]$, there exists a ring
homomorphism $\varphi : \LL \to S$ such that $\varphi(F_U) = G$.
\end{definition}

\begin{theorem}{(Lazard)}\label{lazardthm}
There exists a universal formal group law $F_U\in \LL[[x,y]]$ and the ring of coefficients
$\LL$ is a polynomial ring on infinitely many generators, $\{u_i\}_{i\in\ZZ_+}$,
$$\LL \cong \ZZ[u_1,u_2,u_3,\ldots]$$
where $|u_i| = -2i$.
\end{theorem}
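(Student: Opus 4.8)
The plan is to construct $L$ directly as a quotient of a polynomial ring and then verify universality, postponing the identification of $L$ with a polynomial ring to the end. First I would write the universal candidate as follows: introduce indeterminates $a_{ij}$ for $i,j \geq 0$ and form the graded ring $\ZZ[a_{ij}]$, with $|a_{ij}| = 2 - 2i - 2j$ so that the formal sum $\tilde{F}(x,y) = \sum_{i,j} a_{ij} x^i y^j$ is homogeneous of degree $2$. Then $L$ is defined to be $\ZZ[a_{ij}]$ modulo the ideal generated by all coefficients of the three relations forcing $\tilde F$ to be a formal group law: $\tilde F(x,0) = x$ and $\tilde F(0,y) = y$ (which set $a_{00} = 0$, $a_{10} = a_{01} = 1$, and $a_{i0} = a_{0j} = 0$ otherwise), symmetry $\tilde F(x,y) = \tilde F(y,x)$, and associativity $\tilde F(x,\tilde F(y,z)) = \tilde F(\tilde F(x,y),z)$. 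By construction $F_U := \tilde F \bmod I$ is a formal group law over $L$, and its universality is immediate: given any formal group law $G = \sum d_{ij}x^i y^j$ over $S$, the assignment $a_{ij} \mapsto d_{ij}$ is a well-defined ring homomorphism $\ZZ[a_{ij}] \to S$ that kills $I$ (since $G$ satisfies the defining relations), hence descends to $\varphi : L \to S$ with $\varphi(F_U) = G$; uniqueness of $\varphi$ holds because the $a_{ij}$ generate $L$.

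The substantive content is therefore the structure statement $L \cong \ZZ[u_1, u_2, \ldots]$ with $|u_i| = -2i$. I would approach this by analyzing $L$ one degree at a time via its universal property applied to suitable test rings. The first step is to handle the additive/rational structure: base-changing to $\QQ$, every formal group law over a $\QQ$-algebra is isomorphic to the additive one via its logarithm, and one shows $L \otimes \QQ \cong \QQ[m_1, m_2, \ldots]$ where $m_i$ is (essentially) the degree-$(i{+}1)$ coefficient of $\log_{F_U}$; this already fixes the rank of $L$ in each degree. The harder step is the integral refinement. Here I would construct explicit generators $u_i \in L$ and prove they are polynomial generators by a two-sided estimate: an upper bound showing the $u_i$ generate $L$ (using that the graded pieces $L_{-2n}$ are finitely generated and that $L \otimes \QQ$ is the expected polynomial ring, so no new generators are needed beyond one in each negative even degree), and a lower bound showing there are no relations — this is where the real work lies.

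The lower bound is the main obstacle. The classical route, which I would follow, is the Lazard comparison lemma: for the "universal" degree-$n$ symmetric $2$-cocycle condition, the symmetric polynomial $B_n(x,y) = \tfrac{1}{d}\big((x+y)^n - x^n - y^n\big)$, where $d = p$ if $n = p^k$ is a prime power and $d = 1$ otherwise, is integral and generates the group of degree-$n$ symmetric cocycles modulo coboundaries. Proving this — i.e.\ computing $H^2$ of the relevant "additive" complex and pinning down the divisibility constant $d$ via the binomial coefficients $\binom{n}{k}$ and a $p$-adic valuation argument (Kummer's theorem on carries) — is the technical heart. Granting it, one builds $u_n$ so that $F_U(x,y) \equiv x + y + u_n B_{n+1}(x,y) \pmod{\text{higher order}, (u_1,\ldots,u_{n-1})}$, and an induction on $n$ shows the map $\ZZ[u_1, u_2, \ldots] \to L$ is an isomorphism: surjectivity from the upper bound, injectivity because any relation, pushed to lowest degree and then into $L \otimes \QQ$, would contradict the rational computation. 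I would present the cocycle lemma as the key step to be proved carefully and treat the surrounding induction as bookkeeping, citing \cite{Hazewinkel} for the full details of the divisibility analysis.
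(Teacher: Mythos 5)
Your construction of $L$ as $\ZZ[a_{ij}]/I$, with $|a_{ij}| = 2-2i-2j$ and $I$ generated by the coefficient relations coming from unitality, symmetry, and associativity, together with the universality argument, is exactly what the paper does (the paper writes $c_{ij}$ with $|c_{ij}| = -2(i+j-1)$, which is the same grading). The paper then simply cites Hazewinkel for the polynomial-ring structure of $L$; your sketch of the comparison lemma — the rational computation via logarithms, the symmetric $2$-cocycle $B_n$ with the divisibility constant $d = p$ or $1$, and the inductive identification of generators $u_n$ — is the standard argument behind that reference and correctly fills in what the paper leaves out.
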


\subsection{Complex cobordism and genera}\label{complexcomplexsec}
In this section we recall the cohomology theory complex cobordism. For more
details see \cite{Adams}.

\begin{definition}
  There is a cohomology theory $MU^*$ which is defined so that for a
  manifold $X$ of dimension $r$, $MU^{r-l}(X)$ is spanned by cobordism
  classes of maps $f: M\to X$ where $f$ is smooth, proper and $M$ is an
  $l$-dimensional manifold equipped with a stable almost complex structure.
  If $s\in MU^*(X)$, $s : M \to X$ and $g : Y \to X$ then $g^*(s) =
  M\times_X Y \to Y \in MU^*(Y)$.

If $L\to X$ is a complex line bundle over $X$ then the zero section of the
Thom space $X^L$, $s : X \to X^L$ determines a class $s \in MU^2(X^L)$ and
$MU^*$ is oriented by the choice $c_1(L) = s^*(s) \in MU^2(X)$.
\end{definition}

\begin{remark}
  The ring $MU^*(pt)$ consists of compact stably almost complex manifolds
  modulo the cobordism relation. 
\end{remark}

\begin{theorem}{(\cite{Quillen})}\label{quillenthm}
Complex cobordism of a point is isomorphic to the Lazard ring:
$$MU^*(pt) \cong \LL.$$
\end{theorem}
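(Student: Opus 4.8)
The plan is to realise the isomorphism as the map classifying the formal group law of complex cobordism and then to verify bijectivity, separating an easy rational statement from the genuinely homotopy-theoretic integral content. By Theorem~\ref{lazardthm}, for every graded formal group law $(R,F)$ there is a unique ring homomorphism $L\to R$ carrying $F_U$ to $F$; applying this to the formal group law $F_{MU}\in MU^*(pt)[[x,y]]$ attached to $MU^*$ in Section~\ref{complexcomplexsec} produces a canonical graded ring homomorphism
$$\theta\co L\longrightarrow MU^*(pt),\qquad \theta(F_U)=F_{MU}.$$
I claim $\theta$ is an isomorphism. Since $L$ is torsion free by Lazard's theorem, it is enough to prove (i) that $\theta\otimes\QQ$ is an isomorphism --- this then forces $\theta$ to be injective --- and (ii) that $\theta$ is surjective.

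Step (i) is soft. Over $\QQ$ every formal group law $F$ over a ring $R$ is strictly isomorphic to the additive one through its logarithm $\log_F(x)\in(R\otimes\QQ)[[x]]$, the unique power series with $\log_F(x)\equiv x$ in low degree and $\log_F(F(x,y))=\log_F(x)+\log_F(y)$. Applied to $F_U$ this exhibits $L\otimes\QQ$ as the polynomial ring on the coefficients of $\log_{F_U}$, one generator in each degree $-2i$. On the cobordism side $MU$ is rationally a wedge of Eilenberg--MacLane spectra, so $MU^*(pt)\otimes\QQ$ is the polynomial ring on the cobordism classes $[\CC P^n]$, and Mishchenko's formula identifies $\log_{F_{MU}}(x)=\sum_{n\ge 0}\tfrac{[\CC P^n]}{n+1}\,x^{n+1}$. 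Since $\theta$ takes $\log_{F_U}$ to $\log_{F_{MU}}$ coefficient by coefficient, $\theta\otimes\QQ$ is an isomorphism, and because $L$ is torsion free $\theta$ is injective, with torsion cokernel.

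Step (ii) needs the actual structure of $\pi_*MU=MU^{-*}(pt)$. The Thom isomorphism identifies $H_*(MU;\ZZ)\cong\ZZ[b_1,b_2,\ldots]$ with $|b_i|=2i$, the $b_i$ coming from $H_*(\CC P^\infty)$, and from this one computes --- via the mod $p$ Adams spectral sequences, which collapse, together with Step (i), or by Milnor and Novikov's enumeration of stably almost complex cobordism classes through characteristic numbers --- that $\pi_*MU\cong\ZZ[x_1,x_2,\ldots]$ with $|x_i|=2i$. To see that $\theta$ already reaches a polynomial generating set, one argues one prime at a time. Localising at $p$, the $p$-typical generators $v_1,v_2,\ldots$ with $|v_i|=2(p^i-1)$, which together with generators in the remaining degrees form a polynomial basis of $\pi_*MU\otimes\ZZ_{(p)}$, lie in the image of $\theta$: the $v_i$ are read off, modulo $p$ and decomposables, from the $p$-series $[p]_{F_{MU}}(x)$, whose coefficients are integral polynomials in those of $F_{MU}$ and hence in $\img\theta$, while the generators in degrees $2m$ with $m+1$ not a prime power are supplied by the Hurewicz map $\pi_*MU\to H_*(MU;\ZZ)$, which is an isomorphism on indecomposables there. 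Thus $\theta\otimes\ZZ_{(p)}$ is onto the indecomposables, hence surjective by Nakayama's lemma for connected graded rings; as this holds for every $p$ and $\theta$ is rationally surjective, $\theta$ is surjective. With (i) this proves $\theta$ is an isomorphism; full details are in \cite{Quillen,Ravenel,Adams}.

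The hard part is Step (ii). Step (i) is purely formal once the logarithm is written down, but the integral statement admits no shortcut around the homotopy-theoretic computation of $\pi_*MU$ --- the Thom spectrum, the Steenrod action on $H^*(MU;\FF_p)$ and the Adams spectral sequence, or equivalently Milnor and Novikov's characteristic-number count --- together with the matching of a polynomial basis of $\pi_*MU$ against the $p$-typical coefficients of its own formal group law. This is exactly why the result is due to Quillen rather than a corollary of Lazard's theorem.
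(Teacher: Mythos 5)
The paper does not prove this theorem: it is stated as background, attributed to Quillen, and delegated to the references \cite{Quillen,Ravenel,Adams}. So there is no internal argument to compare yours against. What you have written is a correct sketch of the standard route (Adams, Part II; Ravenel's Green book): build $\theta\co L\to MU^*(pt)$ from Lazard universality applied to $F_{MU}$, prove injectivity rationally via the logarithm and Mishchenko's formula, then prove surjectivity by matching indecomposables prime by prime against the Milnor--Novikov computation of $\pi_*MU$, finishing with the graded Nakayama argument. Your emphasis that the integral surjectivity is where the genuine homotopy theory lives is exactly right.

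One step in part (ii) is stated imprecisely enough to leave a gap. Saying that in degrees $2m$ with $m+1$ not a prime power the generators are ``supplied by the Hurewicz map'' does not, by itself, put anything in $\img\theta$: the Hurewicz map is a statement about $\pi_*MU$, not about the image of $L$. What the argument actually requires is to compute the composite $L\to\pi_*MU\to H_*(MU;\ZZ)\cong\ZZ[b_1,b_2,\ldots]$, which classifies the formal group law $b^{-1}(b(x)+b(y))$ with $b(x)=x+\sum_{i\ge1}b_ix^{i+1}$; modulo decomposables the image in degree $2m$ is generated by the binomial coefficients $\binom{m+1}{i}b_m$, whose gcd is $1$ unless $m+1=p^k$, in which case it is $p$. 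This index coincides with the index of the Hurewicz image on indecomposables (this is precisely the content of the Milnor--Novikov $s_m$-computation), and comparing the two gives surjectivity of $\theta$ on indecomposables in those degrees. Your $p$-series observation correctly handles degrees $2(p^k-1)$, but the complementary degrees need the explicit $H_*(MU)$ formal group law computation, not just the statement that Hurewicz is an isomorphism on indecomposables there. With that repair the sketch is sound.
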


\begin{remark}
The induced isomorphism,
$$\Morph(\LL,R)\cong \Morph(MU^*(pt), R)$$
implies that the set of formal group laws in $R$ is in bijection with the set
of \emph{$R$-genera} or $R$-valued cobordism invariants of complex manifolds. 
\end{remark}

\begin{theorem}{(\cite{Thom})}\label{thmthm}
The coefficient ring of complex cobordism is the polynomial ring on complex projective spaces,
$$MU^*(pt)\otimes \QQ \cong \QQ[\CC P^1, \CC P^2,\ldots] \conj{with} \vert\CC P^n\vert = -2i.$$
\end{theorem}

\begin{corollary}
  A genus $\rho : MU^*(pt)\to R$ is determined by its values on complex
  projective spaces.
\end{corollary}

\begin{remark}
 The generators $u_n$ of $\LL$ in Theorem \ref{lazardthm}
  are not equal to complex projective spaces $\CC P^n$ in Theorem \ref{thmthm}.
\end{remark}

\begin{remark}
Given a genus $\rho : MU^*(pt) \to R$, one can hope to define a cohomology
theory $h^*$ with genus $\rho$ by extending coefficients:
$$h^*(X) = MU^*(X)\otimes_{MU^*(pt)} R.$$
This determines a cohomology theory $h^*$ when the ring $R$ is flat as an
$MU^*(pt)$-module, or more generally, when the criteria of the Landweber
exactness theorem hold, see \cite{Landweber}.
\end{remark}

Below are examples of genera and their formal group laws. Not all of these examples correspond to cohomology theories.

\begin{examples}{(Formal group laws and genera)}\label{examplesofgenera}
\begin{enumerate}
\item The formal group law associated to cohomology is the additive group law $F_H(x,y) = x+ y$.
The associated genus $\rho : MU^*(pt)\to \ZZ$ is determined by $\rho(\CC P^{n}) = 0$.

\item The graded Todd genus, determined by $\Td(\CC P^n) = \beta^n \in
  \ZZ[\beta]$, is associated to the formal group law $F_K(x,y) = x + y - \beta
  xy$ for connective complex K-theory. The ring $\ZZ[\beta]$ is graded with $\beta$ placed in degree $-2$. 

Complex K-theory has the same formal group law as connective complex K-theory, but is defined over the ground ring $\ZZ[\beta,\beta^{-1}]$.

\item Hirzebruch's $\chi_a$-genus is defined using Dolbeault cohomology of $M$:
$$\chi_a(M) = \sum_{p,q} (-1)^p a^q \dim H^{p,q}(M) \in \mathbb{Z}[a].$$
If $M$ is $\CC P^n$ then $\chi_a(\CC P^n)$ is the cyclotomic polynomial $1-a+a^2-\cdots\pm a^n$. The associated formal group law is given by
$$F_{\chi_a}(x,y) = \frac{x+y + (a-1)xy}{1 + a xy}.$$
This formal group law doesn't quite satisfy our grading conventions. We will instead consider the more general formal group law,
$$F_{\chi_{\alpha,\beta}}(x,y) = \frac{x + y - \beta xy}{1 + \alpha xy} \in \ZZ[\alpha,\beta][[x,y]].$$
The grading is obtained by placing $\alpha$ in degree $-4$ and $\beta$ in degree $-2$.

\item There is a formal group law associated to any 1-dimensional algebraic
  group. If $Q(x) = 1- 2\delta x^2 + \epsilon x^4$ then Euler determined a
  two parameter family of formal group laws
$$F(x,y) = \frac{x \sqrt{Q(y)} + y \sqrt{Q(x)}}{1-\epsilon x^2 y^2}$$
associated to Jacobi quartics.  The power series $F$ is defined over the ring
$\ZZ[\frac{1}{2},\epsilon, \delta]$ in which $\delta$ has degree $-4$ and
$\epsilon$ has degree $-8$. The values of the associated genus $\rho$ are
determined by power series $\log_F(x)$ where
$$\log_F(x) = \int^x_0 \frac{dy}{\sqrt{Q(y)}} \quad\normaltext{ and }\quad \rho(\CC P^n) = \frac{\log_F^{(n+1)}(0)}{n!}.$$
\end{enumerate}
\end{examples}

\section{nilHecke algebras}\label{nhpresentationssec}

In \cite{BE1, BE2}, the authors Bressler and Evens found that each formal
group law yields a deformed version the divided difference operators
$\partial_j$ which determine the nilHecke algebra. In this section, we
derive a presentation for nilHecke algebras $\GNH_{n}$ associated to each
formal group law $F\in R[[x,y]]$.

We recall the definition of the nilHecke algebra below in order to provide a
framework for comparison.

\begin{definition}\label{nilheckedef}
The \emph{nilHecke algebra} $\NH_n$ is the graded ring generated by operators $x_i$
in degree $2$, $1\leq i \leq n$, and $\partial_j$ in degree $-2$, $1 \leq j
< n$, subject to the relations:
\[
 \begin{array}{ll}
  \partial_i^2 = 0,  & \partial_i\partial_{i+1}\partial_i = \partial_{i+1}\partial_i\partial_{i+1}\\
   x_i \partial_i - \partial_i x_{i+1} = 1, &  \partial_i x_i - x_{i+1} \partial_i = 1.
  \end{array}
\]
The operators also satisfy the far commutativity relations,
\[
\begin{array}{ll}
\partial_i x_j = x_j\partial_i  \quad\text{if $|i-j|>1$}, &   \partial_i\partial_j = \partial_j\partial_i \quad \text{if $|i-j|>1$},\\   &  x_i x_j = x_j x_i \quad\text{for $1\leq i,j\leq n$}.
\end{array}
\]
\end{definition}

\begin{definition}\label{divdiffdef}
The nilHecke algebra $\NH_n$ acts on the polynomial ring $\aP_n = H^*(BT^n;\ZZ)\cong
\ZZ[x_1,\ldots,x_n]$ by letting $x_i$ act by multiplication and $\partial_j$ act on $f\in\aP_n$ by the \emph{divided difference operator}:
\begin{equation}\label{divdiffeq}
\partial_j(f) = (\Id + \s_j)\frac{1}{x_j - x_{j+1}} = \frac{f-\s_j(f)}{x_j - x_{j+1}}
\end{equation}
where $\s_j$ is the transposition $(j,j+1)$.
\end{definition}

\begin{remark}\label{divdiffrem}
  The image and the kernel of the divided difference operator $\partial_j$ are contained in the
  subring of the polynomial ring consisting of those polynomials which are
  symmetric in the variables $x_j$ and $x_{j+1}$:
$$\img(\partial_j)\subset \aP_n^{(j,j+1)}\conj{and} \aP_n^{(j,j+1)}\subset \ker(\partial_j).$$
It follows that the subring of symmetric polynomials $\Sym_n=\aP_n^{\Sigma_n}$ is contained in the kernel of each operator:
$$\Sym_n\subset\ker(\partial_j)\conj{for} 1\leq j < n.$$
Since each divided difference operator satisfies the Leibniz rule,
\begin{equation}\label{leibeq}
\partial_j(fg) = \partial_j(f)g + \s_j(f)\partial_j(g),
\end{equation}
it follows that $\partial_j : \aP^n\to\aP^n$ is $\Sym_n$-equivariant for all $1\leq j < n$. 
\end{remark}

If $F \in R[[x,y]]$ is a graded formal group law then the expression
$x_i -_F x_{i+1}$ denotes the power series $F(x_i, \inv(x_{i+1}))\in
R[[x_i,x_{i+1}]]$, see Notation \ref{notinv}.  This power series has a
multiplicative inverse $1/(x_i -_F x_{i+1})$ in the ring $R[[x,y]]$ when $R$
has no 2-torsion, see \cite{BE1}. 

The definition below is implicit in the work of Bressler and Evens \cite{BE1,BE2}.

\begin{definition}\label{begnhdef}
  Suppose that $h^*$ is an oriented cohomology theory and let $F\in
  R[[x,y]]$ be the graded formal group law associated to $h^*$. If $\aP_n =
  h^*(BT^n) \cong h^*(pt)[[x_1,\ldots,x_n]]$ is the ring of power series
  with coefficients in $h^*(pt)$ then the \emph{generalized nilHecke
    algebra} $\GNH_n$ is the algebra generated by the operators:
$$x_i : \aP_n \to \aP_n \conj{ and } \partial_j : \aP_n \to \aP_n$$
acting on $\aP_n$. A polynomial generator $x_i$ acts on $\aP_n$ by multiplication as in Definition \ref{nilheckedef} and the divided difference operator $\partial_j$ acts on an element $f\in\aP_n$ by
\begin{equation}\label{bediv}
\partial_j = (\Id + \s_j)\frac{1}{x_j -_F x_{j+1}}.
\end{equation}
\end{definition}

\begin{remark}
  Compare Equation \eqref{bediv} to Equation \eqref{divdiffeq}. The former
  is defined over a ring of power series and the latter is defined over a
  polynomial ring. Also Equation \eqref{bediv} is defined over an extension
  by $h^*(pt)$.
\end{remark}

\begin{proposition}\label{symprop}
For each graded formal group law $F\in R[[x,y]]$ the operators \eqref{bediv} satisfy the following properties:
\begin{enumerate}
\item If $f\in \aP_n$ is a polynomial then $\partial_j(f)\in \aP_n^{(j,j+1)}$ is a series which is symmetric in the variables $x_j$ and $x_{j+1}$.
\item If $f\in \aP_n^{(j,j+1)}$ is a series which is symmetric in the variables $x_j$ and $x_{j+1}$ and $g\in \aP_n$ is a series then
$$\partial_j(fg) = f \partial_j(g).$$
\end{enumerate}
\end{proposition}

The proof of the proposition stems from an examination of the operators
\eqref{bediv}. It implies that the action of the algebra defined by Bressler
and Evens on the ring $\aP_n$ is $\Sym_n$-equivariant; compare to Remark
\ref{divdiffrem}. When $F$ is a symmetric formal group law all of the
properties discussed in Remark \ref{divdiffrem} continue to hold.

The definition below fixes some variables which will be used in Definition \ref{gnhdef}.

\begin{definition}
Suppose that $F\in R[[x,y]]$ is a graded formal group law. Then the power series $s,t\in R[[x_i,x_{i+1}]]$  are defined to be
$$s(x_i,x_{i+1}) = \frac{1}{x_i -_F x_{i+1}} + \frac{1}{x_{i+1} -_F x_i} \normaltext{\quad and\quad} t(x_i,x_{i+1}) = \frac{x_i - x_{i+1}}{x_i -_F x_{i+1}}.$$
Let $l,r \in R[[x_i,x_{i+1},x_{i+2}]]$ be the power series 
{\Small $$l=\frac{1}{(x_{i+2}-_F x_{i+1})(x_{i+2}-_F x_{i})} - \frac{1}{(x_{i}-_F x_{i+1})(x_{i+2}-_F x_{i})} - \frac{1}{(x_{i+1}-_F x_{i})(x_{i+2}-_F x_{i+1})}$$}
{\Small $$r=\frac{1}{(x_{i+1}-_F x_{i+2})(x_{i+2}-_F x_{i})} + \frac{1}{(x_{i+2}-_F x_{i+1})(x_{i+1} -_F x_{i})} - \frac{1}{(x_{i+1}-_F x_{i})(x_{i+2}-_F x_{i})}.$$}
\end{definition}

\begin{remark}
  The elements $s$, $t$, $l$ and $r$ are homogeneous of degree $-2$, $0$,
$-4$ and $-4$ respectively.
\end{remark}

\begin{definition}\label{gnhdef}
  If $F \in R[[x,y]]$ is a graded formal group law then the
  \emph{generalized nilHecke algebra} $\GNH_n$ associated to 
  $F$ is the graded $R$-algebra generated by operators $x_i$ in degree
  $2$, $1 \leq i \leq n$ and $\partial_j$ in degree $-2$, $1\leq j < n$,
  subject to the relations enumerated below.

\begin{enumerate}
\item The two quadratic relations are deformed by the $s$ and $t$ functions,
\[ 
\begin{array}{ll}
\partial_i^2 = s(x_i,x_{i+1})\partial_i, &  x_i \partial_i - \partial_i x_{i+1} =  t(x_i,x_{i+1})\cdot 1, \\
 & \partial_i x_i - x_{i+1} \partial_i  = t(x_i,x_{i+1})\cdot 1.  
\end{array}
\]
\item The braid relation is deformed by the $l$ and $r$ functions,
\[
\partial_{i+1}\partial_i\partial_{i+1} - \partial_i\partial_{i+1}\partial_i  =  l(x_i,x_{i+1},x_{i+2})\partial_i + r(x_i,x_{i+1},x_{i+2})\partial_{i+1} .
\]
\item The operators also satisfy far commutativity relations,
\[
\begin{array}{ll}
\partial_i x_j = x_j\partial_i  \quad\text{if $|i-j|>1$}, &   \partial_i\partial_j = \partial_j\partial_i \quad \text{if $|i-j|>1$},\\   &  x_i x_j = x_j x_i \quad\text{for $1\leq i,j\leq n$}.
\end{array}
\]
\end{enumerate}
\noindent All of the relations above are homogenous with respect to the grading.
\end{definition}

\begin{remark}\label{symmfglgnh}
  When the formal group law $F$ in Definition \ref{gnhdef} is symmetric
  (see Definition \ref{symmfglgnh}), the first two relations become
\[
 \begin{array}{lcl}
  \partial_i^2 = 0 & \quad\text{ and }\quad  & \partial_{i+1}\partial_i\partial_{i+1} - \partial_i\partial_{i+1}\partial_i = l(x_i,x_{i+1},x_{i+2})(\partial_{i}  -\partial_{i+1})
  \end{array}
\]
because $r(x_i,x_{i+1},x_{i+2}) = - l(x_i, x_{i+1},x_{i+2})$.
\end{remark}

It is common to represent Hecke algebras using pictures.  In the
illustration below, the generators $x_i$ are represented by a dot on the
$i$th horizontal strand appearing in a picture. The generators $\partial_j$
are represented by a singular crossing between the $j$th and $j+1$st
vertical strands.

{\Small \begin{equation*}
\vcenter{\xy
    (0,-4)*{};(0,4)*{} **\dir{-}?(1)*\dir{}?(.5)*{\bullet};
\endxy} \quad \text{ and } \quad
\vcenter{\xy 
    (-4,12)*{};(4,20)*{} **\crv{(-4,15) & (4,17)}?(1)*\dir{};
    (4,12)*{};(-4,20)*{} **\crv{(4,15) & (-4,17)}?(1)*\dir{};
\endxy}\;\; 
\end{equation*}}

\noindent
Each dot is graded in degree $2$ and each crossing in degree $-2$. Vertical
strands are indexed by the order in which they appear when reading from left
to right. Following this convention allows us to illustrate the relations
in Definiton \ref{gnhdef} above.

{\Small \begin{equation*}
  \vcenter{\xy 
    (-4,-4)*{};(4,4)*{} **\crv{(-4,-1) & (4,1)}?(1)*\dir{};
    (4,-4)*{};(-4,4)*{} **\crv{(4,-1) & (-4,1)}?(1)*\dir{};
    (-4,4)*{};(4,12)*{} **\crv{(-4,7) & (4,9)}?(1)*\dir{};
    (4,4)*{};(-4,12)*{} **\crv{(4,7) & (-4,9)}?(1)*\dir{};
 \endxy}
\quad =\quad s(x_i,x_{i+1}) \quad \xy
    (-4,-4)*{};(4,4)*{} **\crv{(-4,-1) & (4,1)}?(1)*\dir{};
    (4,-4)*{};(-4,4)*{} **\crv{(4,-1) & (-4,1)}?(1)*\dir{};
    (4,4);(4,8) **\dir{-}?(1)*\dir{};
    (-4,4);(-4,8) **\dir{-}?(1)*\dir{};
    (4,-6);(4,-4) **\dir{-}?(0)*\dir{-};
    (-4,-6);(-4,-4) **\dir{-}?(0)*\dir{-};
 \endxy 
\end{equation*}}

{\Small \begin{eqnarray*}
\xy
  (0,0)*{\xybox{
    (-4,-4)*{};(4,4)*{} **\crv{(-4,-1) & (4,1)}?(1)*\dir{}?(.25)*{\bullet};
    (4,-4)*{};(-4,4)*{} **\crv{(4,-1) & (-4,1)}?(1)*\dir{};
     (8,1)*{};
     (-10,0)*{};(10,0)*{};
     }};
  \endxy
 \; -
 \xy
  (0,0)*{\xybox{
    (-4,-4)*{};(4,4)*{} **\crv{(-4,-1) & (4,1)}?(1)*\dir{}?(.75)*{\bullet};
    (4,-4)*{};(-4,4)*{} **\crv{(4,-1) & (-4,1)}?(1)*\dir{};
     (8,1)*{};
     (-10,0)*{};(10,0)*{};
     }};
  \endxy
 \; = \;\;\; t(x_i, x_{i+1})\;
  \xy
  (4,4);(4,-4) **\dir{-}?(0)*\dir{<}+(2.3,0)*{};
  (-4,4);(-4,-4) **\dir{-}?(0)*\dir{<}+(2.3,0)*{};
  (9,2)*{};
 \endxy
 = 
\xy
  (0,0)*{\xybox{
    (-4,-4)*{};(4,4)*{} **\crv{(-4,-1) & (4,1)}?(1)*\dir{};
    (4,-4)*{};(-4,4)*{} **\crv{(4,-1) & (-4,1)}?(1)*\dir{}?(.75)*{\bullet};
     (8,1)*{ };
     (-10,0)*{};(10,0)*{};
     }};
  \endxy
 \; -
  \xy
  (0,0)*{\xybox{
    (-4,-4)*{};(4,4)*{} **\crv{(-4,-1) & (4,1)}?(1)*\dir{} ;
    (4,-4)*{};(-4,4)*{} **\crv{(4,-1) & (-4,1)}?(1)*\dir{}?(.25)*{\bullet};
     (8,1)*{ };
     (-10,0)*{};(10,0)*{};
     }};
  \endxy 
\end{eqnarray*}}

{\Small \begin{equation*}
 \vcenter{
 \xy 
    (4,-4)*{};(-12,12)*{} **\crv{(4,-1) & (-12,9)}?(1)*\dir{}; 
    (-4,-4)*{};(4,4)*{} **\crv{(-4,-1) & (4,1)}?(1)*\dir{};
    (-12,4)*{};(4,20)*{} **\crv{(-12,7) & (4,17)}?(1)*\dir{}; 
    (4,12)*{};(-4,20)*{} **\crv{(4,15) & (-4,17)}?(1)*\dir{};
    (4,4)*{}; (4,12) **\dir{-};
    (-12,-4)*{}; (-12,4) **\dir{-};
    (-12,12)*{}; (-12,20) **\dir{-}?(1)*\dir{};
\endxy}
 \;\; -\;\;
 \vcenter{
 \xy 
    (-4,-4)*{};(12,12)*{} **\crv{(-4,-1) & (12,9)}?(1)*\dir{};
    (4,-4)*{};(-4,4)*{} **\crv{(4,-1) & (-4,1)}?(1)*\dir{};
    (12,4)*{};(-4,20)*{} **\crv{(12,7) & (-4,17)}?(1)*\dir{};
    (-4,12)*{};(4,20)*{} **\crv{(-4,15) & (4,17)}?(1)*\dir{};
    (-4,4)*{}; (-4,12) **\dir{-};
    (12,-4)*{}; (12,4) **\dir{-};
    (12,12)*{}; (12,20) **\dir{-}?(1)*\dir{};;
\endxy}
= \;\;l(x_i,x_{i+1},x_{i+2})\;\;
\vcenter{
\xy
    (-4,4)*{};(4,12)*{} **\crv{(-4,7) & (4,9)}?(1);
    (4,4)*{};(-4,12)*{} **\crv{(4,7) & (-4,9)}?(1);
    (4,12)*{}; (4,20) **\dir{-}?(1)*\dir{};
    (-4,12)*{}; (-4,20) **\dir{-}?(1)*\dir{};
    (4,-4)*{}; (4,4) **\dir{-}?(1)*\dir{};
    (-4,-4)*{}; (-4,4) **\dir{-}?(1)*\dir{};
    (8,-4)*{}; (8,20) **\dir{-}?(1)*\dir{};
\endxy}\;\; + \;\; r(x_i,x_{i+1},x_{i+2})\;\;
\vcenter{\xy
    (-4,4)*{};(4,12)*{} **\crv{(-4,7) & (4,9)}?(1);
    (4,4)*{};(-4,12)*{} **\crv{(4,7) & (-4,9)}?(1);
    (4,12)*{}; (4,20) **\dir{-}?(1)*\dir{};
    (-4,12)*{}; (-4,20) **\dir{-}?(1)*\dir{};
    (4,-4)*{}; (4,4) **\dir{-}?(1)*\dir{};
    (-4,-4)*{}; (-4,4) **\dir{-}?(1)*\dir{};
    (-8,-4)*{}; (-8,20) **\dir{-}?(1)*\dir{};
\endxy}\;\;
\end{equation*}}

The far commutativity relations in Definition \ref{gnhdef} are
pictured by diagrams which show singular crossings or dots on disconnected
strands freely sliding up or down along the vertical axis without interacting.

\begin{theorem}\label{bsgnhthm}
The algebra defined by the presentation in Definition \ref{gnhdef} is isomorphic to the algebra
generated by the operators in Definition \ref{begnhdef}.
\end{theorem}

\begin{proof}

In the universal setting (Theorem \ref{lazardthm}),
each operator $\partial_i$ in Definition \ref{begnhdef} is a $\aP_n$-linear combination of operators in  $\LL[\Sigma_n]$. Since the symmetric group $\Sigma_n$ has the presentation:
$$\Sigma_n = \inp{\s_1,\ldots, \s_{n-1} \,:\, \s_i^2 = 1, \s_i \s_j = \s_j \s_i \normaltext{ for } \vert i-j\vert > 2, \normaltext{ and } \s_i \s_{i+1} \s_i = \s_{i+1} \s_i \s_{i+1}}$$
and $\s_i = (x_i -_F x_{i+1})\partial_i - \Id$, finding a presentation for $\GNH_n$ amounts to a computation. For example, the $\aP_n$-span of the set
$$\{ 1, \partial_i, \partial_{i+1}, \partial_i\partial_{i+1},\partial_{i+1}\partial_i, \partial_i\partial_{i+1}\partial_i, \partial_{i+1}\partial_i\partial_{i+1}\}$$
is linearly dependent and the braid relation is obtained by solving for coefficients in the dependency relation. The commutation relation between $x_{i\pm 1}$ and $\partial_{i}$ can be seen to hold in the same way. The far commutativity relations follow by observing that they hold for polynomials in $x_i$ and the operators $\s_i \in \Sigma_n$ respectively.
\end{proof}

\begin{example}{(deformations of nilHecke algebras)}\label{examplesofgnh}
\begin{enumerate}
\item The genus $\rho$ associated to homology is determined by $\rho(\CC P^{n}) = 0$ and corresponds to the formal group law $F(x,y) = x + y$. For this formal group law the presentation of $\GNH_n$ in Definition \ref{gnhdef} becomes the presentation of $\NH_n$ in Definition \ref{nilheckedef}.

\item For connective K-theory, a graded Todd genus $\Td$, determined by 
$$\Td(\CC P^n) = \beta^n \in   \ZZ[\beta]$$
leads to the formal group law  $F_K(x,y) = x + y - \beta  xy$. This formal group law in turn determines an algebra $\GNH_n$ which is defined over the ring $R = \ZZ[\beta]$ where $\beta$ is placed in degree $-2$. The formula,
$$x_i -_F x_{i+1} = \frac{x_i - x_{i+1}}{1+\beta x_{i+1}},$$
shows that the functions defining $\GNH_n$ are given by:
\[
\begin{array}{lll}
s(x_i,x_{i+1}) = \beta & & t(x_i,x_{i+1}) = 1 - \beta x_{i+1}\\
l(x_i,x_{i+1},x_{i+2}) = 0 & & r(x_i,x_{i+1},x_{i+2}) = 0.
\end{array}
\]
Notice that $l,r = 0$ implies that the braid relation, $\partial_i
\partial_{i+1} \partial_i = \partial_{i+1}\partial_i\partial_{i+1}$ holds.
The formal group laws associated to singular homology and K-theory determine
the only generalized nilHecke algebras $\GNH_n$ for which this is true, see
\cite{BE1}.

\item Related to Hirzebruch's $\chi_a$-genus is the formal group law
$$F_{\chi_{\alpha,\beta}}(x,y) = \frac{x + y - \beta xy}{1 + \alpha xy}$$
which is defined over the graded ring $\ZZ[\alpha,\beta]$ where $\alpha$ is placed
in degree $-4$ and $\beta$ in degree $-2$. The formula,
$$x_i -_F x_{i+1} = \frac{x_i - x_{i+1}}{1-\beta x_{i+1} - \alpha x_i x_{i+1}}$$
shows that the functions defining $\GNH_n$ are given by:
\[
\begin{array}{lll}
s(x_i,x_{i+1}) = \beta & & t(x_i,x_{i+1}) = 1 - \beta x_{i+1} - \alpha x_i x_{i+1}\\
l(x_i,x_{i+1},x_{i+2}) = \alpha & & r(x_i,x_{i+1},x_{i+2}) = -\alpha.
\end{array}
\]
Setting $\beta = 0$ in this example yields a symmetric formal group
law. Notice that the functions $l$ and $r$ do not depend on $\beta$.

Although this genus determines deformations of $\NH_n$, it does not
necessarily correspond to a cohomology theory.

\item There is an algebra $\GNH_n$ defined over
  $\ZZ[\frac{1}{2},\epsilon,\delta]$ using Euler's formal group law in
  Section \ref{complexcomplexsec}. 
\item The universal example is determined by complex cobordism and the universal formal group law. 
\end{enumerate}
\end{example}

\begin{remark}\label{stackrem}
  For each commutative ring $R$, Theorem \ref{lazardthm} implies that the
  set of formal group laws $F\in R[[x,y]]$ is given by $\Hom(\LL,R)$.  The
  \emph{moduli space of formal group laws} is the scheme determined by the
  functor of points $R \mapsto \Hom(\LL,R)$. The construction above yields a
  sheaf of algebras $\aGNH_n$ on this space. The space
  $\Spec(\LL)\cong\AA^{\infty}$ over which this sheaf resides is the
  parameter space for all of the deformations defined in Section
  \ref{nhpresentationssec}.

  The nilHecke algebra $\NH_n$ can be defined as an algebra of
  $\Sym_n$-equivariant endomorphisms of the polynomial ring $\aP_n$. This
  means that the nilHecke algebra has an intrinsic topological definition.
\begin{equation}\label{inteq}
\NH_n = \End_{\Sym_n}(\aP_n) \conj{ suggests } \GNH_n = \End_{h^*(BU(n))}(h^*(BT^n))
\end{equation}

While it is not necessarily true that Definition \ref{gnhdef} agrees with
the definitions suggested above, (although, see Remark \ref{beisstupidrem}),
these more intrinsic definitions show that the sheaves of algebras
associated to them descend from sheaves on the moduli space of formal group
laws to sheaves on the moduli stack of isomorphism classes of formal group
laws. Over a field of characteristic 0, this stack degenerates to a single
point.  This suggests that one should restrict to spaces of
deformation parameters which have fewer automorphisms. Definition
\ref{symdef} may be one step in this direction.
\end{remark}

\section{Deformations with positive gradings}\label{defpossec}

Recently the author was asked whether there are deformations of the nilHecke
algebras with positively graded deformation parameters. In this section a
family of such deformations is defined. The divided difference operators
used to obtain these deformations are related to discrete Fourier
transforms.

\begin{remark}\label{beisstupidrem}
When $F\in R[[x,y]]$ is a graded formal group law, if 
$$t(x_j,x_{j+1})=\frac{x_j - x_{j+1}}{x_j -_F x_{j+1}} = \sum_{r,s} c_{r,s} x_j^r x_{j+1}^s\conj{ where } c_{r,s}\in R$$
then the Bressler-Evens divided difference operator $\partial_j^F$ is related to the usual divided difference operator $\partial_j$ by:
\begin{equation}\label{yaddaeq}
\partial_j^F(f) = \sum_{r,s} c_{r,s} \partial_j(x_j^r x_{j+1}^s f) = \sum_{r,s} d_{r,s} x_j^r x_{j+1}^s \partial_j(f),
\end{equation}
where further application of the Liebniz rule \eqref{leibeq} produces the right hand side.
(Alternatively, the $\Sym_n$-equivariance of $\partial_j^F$ in Proposition \ref{symprop} and Equation \eqref{inteq} imply that, after a completion, such a relation must exist and grading contraints force each term in the series to be the form above.)
\end{remark}

This suggests that in order to deform the nilHecke algebra over the
ground ring $\ZZ[\beta]$ with $|\beta| \in\ZZ_+$ we should look for a family
of operators $\ga_j^\beta : \aP_n\to \aP_n$ on the polynomial ring $\aP_n =
\ZZ[x_1,\ldots,x_n]$ which satisfy $|\b \ga^\b_j| = -2$ and set
$\tilde{\partial}_j = \partial_j + \b \ga^\b_j$ to obtain an operator which
deforms the divided difference operator $\partial_j$. Degree constraints and
the desire for locality leave us with operators of the form $\ga^\b_j(f) =
1/p_j \cdot \d(f)$ where $\d$ is an operator of degree 0, $p_j \in
R[x_j,x_{j+1}]$ and $p_j\,\vert\,\d(f)$ for each $f\in\aP_n$. 

This motivates the definition below.

\begin{definition}
Let $\rho^{/n} = e^{2 \pi i /n}\in \CC$. Then there are operators $\rho_j^{/n} : \aP_n\otimes \CC \to \aP_n \otimes \CC$ which act on $f\in \aP_n$ by
$$\rho_j^{/n}(f(x_1,\ldots,x_j,\ldots,x_n)) = f(x_1,\ldots,\rho^{/n} x_j,\ldots,x_n).$$
Setting $\rho_j^{k/n} = (\rho_j^{/n})^k$ allows us to define the Cartier operator: $\Lambda^{n}_j = \sum_k \rho_j^{k/n}$. Since $\rho_k^{/n} \rho_j^{/n} = \rho_j^{/n} \rho_k^{/n}$, it follows that $\Lambda^n_k \Lambda^n_j = \Lambda^n_j \Lambda^n_k$. Notice that $(x_j^n-x_{j+1}^n)\,\vert\, \Lambda^n_{j} \Lambda^n_{j+1} f$ for all $f\in \aP_n$. 

The \emph{$n$th cyclotomic divided difference operator} $\ga^n_j : \aP_n \to \aP_n$, $0<j<n$, is given by
\begin{equation}\label{gadef}
\ga^{n}_j = (\Id + \s_j) \frac{1}{x_j^n - x_{j+1}^n}\Lambda^n_j\Lambda^n_{j+1}.
\end{equation}
The degree is given by the rule $|\ga^n_j| = -2n$. Divisibility considerations imply that $\ga^n_j$ takes polynomials with integer coefficients to polynomials with integer coefficients.
\end{definition}

\begin{example}
After setting $x = x_1$ and $y= x_2$, we can write $\ga^2_1(f)$ as
$$\frac{f(x,y)-f(y,x)+f(-x,y)-f(-y,x)+f(x,-y)-f(y,-x)+ f(-x,-y)-f(-y,-x)}{x^2-y^2}.$$
\end{example}

In the general case, the numerator of $\ga^n_j$ involves a sum over the
regular $n$-gon in the circle $S^1$ formed by the $n$th roots of unity. This
ensures that the polynomial in the numerator is divisible by the
denominator.

\begin{proposition}
The cyclotomic divided difference operators $\ga^n_j$ satisfy the following properties:
\begin{enumerate}
\item If $f\in \aP_n$ is a polynomial then $\ga^n_j(f)\in \aP_n^{(j,j+1)}$ is a polynomial which is symmetric in the variables $x_j$ and $x_{j+1}$.
\item If $f\in \aP_n^{(j,j+1)}$ is a polynomial which is symmetric in the variables $x_j$ and $x_{j+1}$ then $\ga^n_j(f) = 0$.
\end{enumerate}
\end{proposition}

The proof of the proposition follows from an examination of the form of
Equation \eqref{gadef}. Compare to Proposition \ref{symprop}.

\begin{definition}
Let $\BB = \ZZ[\b_1,\b_2,\ldots]$ be the graded polynomial ring in infinitely many generators $\b_n$ where $|\b_n| = 2n-2$. The \emph{cyclotomic deformation} $\NH_n^{\cyc}$ of the nilHecke algebra is the algebra determined by the action of the operators:
$$\tilde{\partial}_j = \partial_j + \sum_{n\geq 1} \b_n \ga_j^n$$
on the polynomial ring $\aP_n \otimes \BB$. 

The operators $\tilde{\partial}_j$ take polynomials to polynomials because
the sum $\tilde{\partial}_j(f)$ contains only finitely many non-zero terms
for any polynomial $f\in\aP_n$.
\end{definition}

\begin{remark}
A refinement of the operators $\ga_j^n$ is obtained by writing $x^n-y^n = \prod_{d\vert n} \Phi_d(x,y)$ and summing over only those roots associated to a denominator of the form $\prod_i \Phi_{d_i}(x,y)$.
\end{remark}

\begin{remark}
  A computation shows that the operators $\ga_{j}^n$ are not
  $\Sym_n$-equivariant. In particular, none of these deformations are
  determined by oriented cohomology theories. 
\end{remark}

\end{document}